\documentclass{article}
\usepackage{amsmath,amsfonts,amsthm,amssymb,amscd,color,xcolor,mathrsfs,verbatim}
\usepackage{graphicx,eurosym}
\usepackage{hyperref}
\usepackage[applemac]{inputenc}
\usepackage[cyr]{aeguill}
\colorlet{darkblue}{blue!50!black}
\hypersetup{
    colorlinks,%
    citecolor=blue,%
    filecolor=red,%
    linkcolor=darkblue,%
    urlcolor=blue,%
    pdfnewwindow=true,%
    pdfstartview={FitH}
}


\DeclareMathAlphabet\mathbfcal{OMS}{cmsy}{b}{n}

\newcommand{\aA}{{\cal A}}
\newcommand{\KK}{{\cal K}}

\newcommand{\LL}{{\cal L}}

\newcommand{\VV}{{\cal V}}
\newcommand{\ty}{\infty}

\newcommand{\lag}{\langle}
\newcommand{\rag}{\rangle}
\newcommand{\la}{\lambda}
\newcommand{\DD}{{\cal D}}

\newcommand{\I}{{\mathbb I}}

\newcommand{\IP}{{\mathbb P}}

\newcommand{\BBBBB}{{\mathcal B}}


\newcommand{\e}{\varepsilon}
\newcommand{\DDD}{{\boldsymbol{D}}}

\newcommand{\EE}{{\cal E}}

\newcommand{\RR}{{\cal R}}

\newcommand{\be}{\begin{equation}}
\newcommand{\ee}{\end{equation}}
\newcommand{\bp}{\begin{proof}}
\newcommand{\ep}{\end{proof}}
\newcommand{\bi}{\begin{itemize}}
\newcommand{\ei}{\end{itemize}}

\newcommand{\PPPP}{\mathfrak P}

 \newcommand{\HH}{\mathcal{H}}

\newcommand{\R}{\mathbb{R}}

\newcommand{\PP}{{\cal P}}

\newcommand{\p}{\partial}

\newcommand{\diver}{\mathop{\rm div}\nolimits}

\newcommand{\supp}{\mathop{\rm supp}\nolimits}

\newcommand{\es}{\varepsilon}

\newcommand{\dd}{\,{\textup d}}

\theoremstyle{plain}
\newtheorem{theorem}{Theorem}[section]
\newtheorem*{mt}{Main Theorem}
\newtheorem{lemma}[theorem]{Lemma}
\newtheorem{proposition}[theorem]{Proposition}

\theoremstyle{definition}

\theoremstyle{remark}

\numberwithin{equation}{section}
\newtheorem*{definition*}{Definition}
\newtheorem*{problem*}{Problem}

\newtheorem*{remark*}{Remark}
\newtheorem*{note*}{Note}
\begin{document}
\author{Vahagn~Nersesyan\footnote{Laboratoire de Math\'ematiques, UMR CNRS 8100, UVSQ, Universit\'e Paris-Saclay, 45,~av.~des Etats-Unis, F-78035 Versailles, France \& Centre de Recherches Math\'emati\-ques,~CNRS~UMI 3457, Universit\'e de Montr\'eal, Montr\'eal,  QC, H3C 3J7, Canada;    e-mail: \href{mailto:Vahagn.Nersesyan@math.uvsq.fr}{Vahagn.Nersesyan@math.uvsq.fr}}}

\title{Ergodicity   for the randomly forced   Navier--Stokes system in a two-dimensional unbounded domain}

\date{\today}

 \maketitle

\begin{abstract}

 The ergodic properties of the randomly forced   Navier--Stokes system have been extensively studied in the literature during the last two decades.    The problem has always been considered in bounded domains,     in order to have, for example,  suitable spectral properties for the Stokes operator,   to ensure some compactness properties for the resolving operator of the system and the associated functional spaces,
    etc.~In the present paper, we  consider  the   Navier--Stokes system  in an unbounded domain satisfying the Poincar\'e inequality.~Assuming that the system is perturbed by a bounded non-degenerate noise, we
  establish uniqueness of  stationary measure and exponential mixing in the dual-Lipschitz metric.~The proof is carried out by developing the controllability approach of the papers~\cite{shirikyan-2018,KNS-2018} and using the asymptotic compactness of the   dynamics.

 \smallskip  
\noindent
{\bf AMS subject classifications:}    35Q30, 37A25, 93B05.

\smallskip
\noindent
{\bf Keywords:}    Navier--Stokes system, unbounded domain, exponential mixing,   controllability, asymptotic compactness. 
\end{abstract}

     \tableofcontents

\setcounter{section}{-1}

 \section{Introduction}

  The ergodicity of    randomly forced 2D Navier--Stokes (NS) system   has been widely studied in the literature in the case of   bounded domains (see the papers  \cite{FM-1995,KS-cmp2000,EMS-2001,BKL-2002} for the   first  results and the book    \cite{KS-book} and the 
  reviews   \cite{MR2459085, debussche-2013}  
   for a detailed discussion of different          methods and for further references).~This paper is concerned with the ergodic behaviour of  the NS  system   in an {\it unbounded}~domain $D$ in $\R^2$ with smooth boundary $\p D$:    
  \begin{gather}
\p_t  u-\nu \Delta u+\lag u, \nabla \rag u+\nabla p   = \eta(t,x),  \quad x\in D,
\quad    \label{0.1}\\
 \diver u   =0, \quad  u|_{\p D}=0, \label{0.2}\\
u(0) =u_0.\label{0.3}
\end{gather}
Here   $\nu>0$ is the kinematic viscosity of the fluid, $u = (u_1(t,x), u_2(t,x))$ is the velocity field,~$p = p(t, x)$ is the pressure, and~$\eta$ is  an    external random force. To have a suitable dissipativity property for   solutions, we 
  assume
  that   $D$ is a {\it Poincar\'e domain},\footnote{E.g.,  we can assume that $D$ is  bounded in some direction $a\in \R^2_*$, i.e., $\sup_{x\in D} |\lag a,x \rag|<\ty $.} i.e., there is a number $\la_1>0$ such that
\begin{equation}\label{0.4}
	\int_D |v|^2  \dd x\le \la_1^{-1}\int_D |\nabla v|^2  \dd x, \quad v \in C_0^\ty(D,\R^2).
\end{equation} The random force $\eta$ is    a   process of the~form  
\begin{equation} \label{0.7}
\eta(t,x)=\sum_{k=1}^\infty \I_{[k-1,k)}(t)\eta_k(t-k+1,x), \quad t\ge0, \,\, x\in D,
\end{equation}
where  $\I_{[k-1,k)}$ is the indicator function of the   interval~$[k-1,k)$ and $\{\eta_k\}$ is a sequence of   independent and identically distributed (i.i.d.) random variables~in~the space\footnote{We denote by $H$     the usual  functional space  for the NS system     defined by~\eqref{N:0.9}.} 
     $E:=L^2([0,1],H)$.~Moreover, the law of $\eta_k$ is assumed to be {\it decomposable} in the following~sense. 
     \begin{description}
\item[Decomposability.]{\sl There is an   orthonormal basis~$\{e_j\}$     in~$E$ such~that
\begin{equation} \label{0.8}
\eta_k=\sum_{j=1}^\infty b_j \xi_{jk}e_j 
\end{equation}
for some real-valued  independent~random variables $\xi_{jk}$ verifying $|\xi_{jk}|\le1$ and some
 positive numbers~$b_j$ such that $\sum_{j=1}^\ty b_j^2<\ty.$~The   law of the random variable~$\xi_{jk}$~is absolutely continuous  with respect to the Lebesgue measure and the corresponding density~$\rho_j$ is       $C^1$-smooth  and~$\rho_j(0)>0$ for all~$j\ge1$.

 } 
\end{description}
The restriction to integer times of the  velocity field~$u_t$       defines a family of Markov processes~$(u_k, \IP_u)$ parametrised by the initial condition  $u_0= u \in H$.~The associated Markov operators are denoted by~$\PPPP_k$ and~$\PPPP_k^*$.~Recall that a measure~$\mu\in \PP(H)$ is stationary for the family~$(u_k, \IP_u)$ if $\PPPP_1^*\mu=\mu$.
  In this paper, we prove the following~result.  
\begin{mt}Under the above assumptions, the family~$(u_k, \IP_u)$ has a unique stationary measure $\mu\in\PP(H)$.~Moreover, it is exponentially mixing in the following sense: for any compact set $\HH\subset H$, there are numbers
  $C>0$ and~$c>0$ such~that 
\begin{equation}\label{mix}
\|\PPPP_k^*\la-\mu\|_{L(H)}^*
\le C  e^{-c k}, \quad    k\ge  0
\end{equation}for any initial measure  $\la\in \PP(H)$ with $\supp\la\subset \HH$.~Here
   $\|\cdot\|_{L(H)}^*$ is the dual-Lipschitz metric defined by~\eqref{N:0.12}.  
 \end{mt}  
  To the best of our knowledge,
    this  is the first result that establishes  uniqueness of stationary measure and exponential mixing for the NS system in an unbounded domain.~In this unbounded setting there are at least two additional difficulties  compared to the case of a bounded domain.~First, the resolving   operator  of the equation is  not compact and does not have a compact attracting set.~The second and more important difficulty   is related to the presence of a continuous component in the spectrum  of the
Stokes operator.~Let us note that the existence of a stationary measure for the NS system perturbed by the random force~\eqref{0.7} can be established by combining the  {\it Bogolyubov--Krylov argument} and the   {\it asymptotic compactness} of the dynamics.~When the driving force is a white noise, 
        existence results are obtained  in~\cite{MR1808628},   for a real Ginzburg--Landau  equation, in~\cite{MR1889231},
   for  a complex Ginzburg--Landau equation,  in~\cite{MR2238928}, for the NS system, and in~\cite{MR3689215}, for a damped Schr\"odinger equation.


 The     uniqueness of stationary measure and  mixing properties  for PDEs in unbounded domains have been studied previously only for the Burgers equation.~The case of  inviscid  equation on the   real line  has been   considered  in the paper~\cite{MR3110798},   under the assumption that the  driving force is  a space-time homogeneous Poisson point process.~The proof is based on a combination of   Lagrangian methods and first/last passage percolation theory.~This result is generalised to the   viscous case in the recent  paper~\cite{BL-2019}, where the
    perturbation   is a space-time homogeneous random kick force.~In both papers, the stationary measure is space translation invariant. In the case of the   NS   system perturbed by a {\it homogeneous} noise, 
    the uniqueness of a space-time translation invariant measure (and in some cases also the existence)   remains   an   {\it open~problem}.

  The proof of the Main Theorem uses
   a controllability approach         of the papers~\cite{shirikyan-asens2015, shirikyan-2018}, where     exponential mixing is established  for the NS~system with   a space-time and  boundary {\it localised}   forcing.~In~\cite{KNS-2018, KNS-2019},  methods of control theory are used   to study    a family of parabolic PDEs   with a random perturbation  that is {\it highly degenerate}    in the Fourier space; see also the paper~\cite{KZ-2019}, where a    non-degenerate version of these results is presented. In~\cite{JNPS-2019}, controllability is used to derive large deviations principle for the Lagrangian trajectories of the~NS~system.

    The main novelty of the controllability  argument we use  here  is that the deterministic operator is not supposed to be regularising  to a space that is compactly embedded into the main phase space (see Theorem~\ref{T:1.1}).~That regularisation condition is replaced by two weaker   properties: we assume that  the nonlinear dynamics   is asymptotically compact and the   linearised operator         can be decomposed into a sum of two   operators  one of which is {\it dissipative} and the other   is {\it compact} (see~\eqref{reps1}).~It is proved   that  these  weaker~conditions are still sufficient for the exponential mixing.~We make an essential use~of the   form of the nonlinearity      to verify these    properties for   the NS system.~The reader is referred to Section~\ref{S:1.1} for   further  discussion of     the abstract controllability~criterion and to Section~\ref{S:2} for the verification of the conditions of this criterion for the~NS~system.

    Let us close this section with two remarks.~Note that the convergence rate~$c$ in~\eqref{mix} depends on~the~initial compact~$\HH$.~Indeed, this is related to the fact that  the NS system in the unbounded case does not have   a compact invariant attracting set.~Without going into the details, let us mention   that the existence of such attracting set (and a uniform  convergence rate) can be recovered  if we take
  initial condition  and   forcing   in   weighted Sobolev spaces as in~\cite{B-1992}.

The second remark is about the   NS system with the  {\it Ekman damping}.~By literally repeating the arguments of the   proof of the Main Theorem, one can    establish exponential mixing   in the case of the whole space $D=\R^2$ or arbitrary  unbounded\footnote{I.e.,   domain $D$ which does not necessarily satisfy the Poincar\'e inequality.} domain~$D\subset \R^2$  with smooth boundary   for the following     system:  
   \begin{gather*}
\p_t  u-\nu \Delta u+au+\lag u, \nabla \rag u+\nabla p   = \eta(t,x), 
\quad    \\
 \diver u   =0, \quad u|_{\p D}=0,
 \end{gather*}where $a>0$ is the damping parameter.~The damping   ensures a   dissipativity property for the solutions without any assumption on the   domain. 
  We shall not discuss the details of this generalisation in this paper.

          \subsection*{Acknowledgments}

         The author   thanks   Armen Shirikyan for   helpful discussions and suggestions.~This research  was supported by Agence Nationale de la Recherche   grant    ANR-17-CE40-0006-02 and CNRS PICS grant Fluctuation theorems in stochastic systems.

\subsection*{Notation}
 
Let   $D\subset \R^2$ be an unbounded Poincar\'e domain   with smooth boundary. In this paper,
 we use the following functional spaces. 
 
  \smallskip
\noindent 

 \smallskip
\noindent 
$C_0^\ty(D,\R^2)$ is the space of compactly supported smooth functions $u:D\to \R^2$, and  
\begin{equation}\label{N:0.8}
\VV=\{u\in C_0^\ty(D,\R^2):\diver u=0\}.
\end{equation}

 \smallskip
\noindent 
$H^s(D, \R^2)$ and  $L^p(D, \R^2)$   are the   Sobolev and Lebesgue  spaces on $D$.~We consider the NS system in the usual  
    functional spaces:
\begin{align}
H&=\text{closure of $\VV$ in $L^2(D,\R^2)$,}\label{N:0.9}	\\
V&=\text{closure of $\VV$ in $H^1(D,\R^2)$}	\label{N:0.10}
\end{align}   endowed   with the scalar products
\begin{gather*}
\lag u,v\rag=\int_D u\cdot v\dd x, \quad\quad \lag u,v\rag_1=\int_D (\nabla u_1\cdot \nabla v_1+\nabla u_2\cdot \nabla v_2)\dd x
\end{gather*}
and the corresponding norms $\|\cdot\|=\sqrt{\lag\cdot,\cdot\rag}$ and~$\|\cdot\|_1=\sqrt{\lag\cdot,\cdot\rag_1}$. The Poincar\'e inequality \eqref{0.4} implies that $\|\cdot\|_1$ is equivalent to the~norm~inherited from~$H^1(D,\R^2)$.  
The       dual of~$V$  with respect to the scalar product in $H$ is denoted by $V'$.

 \smallskip
   Let $X$ be a   Polish space with metric $d$.

\smallskip
\noindent
 ${\cal B}(X)$ denotes the Borel $\sigma$-algebra on $X$. 
 
   \smallskip
\noindent
$C_b(X)$    is the space of continuous functions $g:X\to \R$ endowed with the sup-norm    $\|g\|_{\ty}$.~When $X$ is compact, we write $C(X)$.

  \smallskip
\noindent
$L_b(X)$   is the space of     functions $g\in C_b(X)$   for which the following norm is finite: 
$$
 \|g\|_{L(X)}=\|g\|_\infty+\sup_{u\neq v} \frac{
{|g(u)-g(v)|}}{ d(u,v)  }.
$$ 

\smallskip
\noindent
   $\PP(X)$ is the set of Borel probability measures on $X$ endowed with the   metric  
\begin{equation}\label{N:0.12}
  \|\mu_1-\mu_2\|_{L(X)}^*=\sup_{\|g\|_{L(X)}\le 1} |\lag g,\mu_1\rag-\lag g,\mu_2\rag|, \quad \mu_1, \mu_2\in \PP(X),
\end{equation}
  where  $\lag g,\mu\rag=\int_Xg(u)\mu(\!\dd u)$.

    \smallskip
 Let  $E$ be a Banach  space endowed with a
norm $\|\cdot\|_E$, and let $J_T=[0, T ]$.    

  \smallskip
\noindent 
$L^p(J_T,E)$,   $1\leq p<\infty$  is the
space of measurable functions $u: J_T \rightarrow E$ such~that
\begin{equation}
\|u\|_{L^p(J_T,E)}=\bigg(\int_{0}^T \|u(s)\|_E^p\dd s
\bigg)^{\frac{1}{p}}<\infty.\nonumber
\end{equation}
\noindent 
$C(J_T,E)$ is the space of continuous functions $u:J_T\to E$ with the 
norm 
$$
\|u\|_{C(J_T,E)}=\sup_{t\in J_T} \|u(t)\|_E.
$$ 
$\LL(E,Y)$ is the space of bounded linear operators from $E$ to another Banach
space $Y$.~We write $\LL(E)$ when $E=Y$.

  \smallskip
\noindent
    $B_E(a,R)$   is the closed   ball in~$E$ of radius~$R$ centered at~$a$.~We  write  $B_E(R)$ when     $a=0$.  
    
 \smallskip
\noindent
$\DD(\eta)$ is the law of $E$-valued random variable $\eta$.

      \section{Abstract criterion}\label{S:1.1}

Let $H$ be a separable  Hilbert space, and $E$ be a separable Banach space.   In this section, 
 we consider a   {\it random dynamical system}  of the form
\begin{equation}\label{E:1.1}
u_k=S(u_{k-1}, \eta_k), \quad k\ge1,	
\end{equation}where      $S:H\times E\to H $  is a continuous   mapping and $\{\eta_k\}$ is a sequence of i.i.d.~random variables in $E$.~Let $\KK\subset E$  be the support of   
  the law~$\ell:=\DD(\eta_k)$. For any     sequence        $ \{\zeta_k\}$ in  $\KK$,  let us denote  by  $ S_k(u; \zeta_1, \ldots, \zeta_k)$   the trajectory of~\eqref{E:1.1}  corresponding to   the initial condition
  \begin{equation}\label{E:1.2} 
u_0=u\in H
\end{equation} and the vectors $\eta_i=\zeta_i$, $i=1, \ldots, k$.~For any     set~$\HH\subset H$,  we  define  the    {\it set of attainability  in time $k\ge1$}:      
$$
 \aA_k(\HH):=\{S_k(u; \zeta_1,\ldots,\zeta_k): u\in \HH,\,\, \zeta_1,\ldots,\zeta_k\in \KK\} 
 $$and the {\it set of attainability  in infinite time}: 
 $$ 
\aA(\HH):=\overline{\cup_{k=1}^\ty \aA_k(\HH)}^H.
$$  The following five  conditions are assumed to be satisfied   for  the mapping   $S$ and the  measure~$\ell$.

 \medskip
{\bf (i)~Regularity.}{\sl~The mapping $S:H\times E\to H$ is twice continuously differentiable, and its   derivatives are bounded on bounded subsets.~Moreover, for any~$(u,\eta)\in H\times  \KK$, the  derivative~$(D_uS)(u,\eta)$ can be represented as  
\begin{equation}\label{reps1}
	(D_uS)(u,\eta)=\Psi_1+\Psi_2(u,\eta),  
\end{equation}where the operators\footnote{$\Psi_1$ does not depend on $(u,\eta)$.} $\Psi_1, \Psi_2(u,\eta)\in L(H)$ are such that
\begin{equation}
  \|\Psi_1\|_{\LL(H)}  = :\varkappa <1  \label{reps2}
 \end{equation} and      $\Psi_2(u,\eta)$ is  compact.

 \medskip

{\bf (ii)~Asymptotic compactness.} {\it For any bounded sequence $\{u_0^n\}$ in $H$, any   integers $l_n\ge1$ such that $l_n\to \ty$, and any family $\{\zeta_m^n: m,n\ge1\}\subset \KK$, the sequence $\{S_{l_n}(u_0^n; \zeta_1^n,\ldots,\zeta_{l_n}^n)\}$ is precompact in $H$.}

 \medskip

{\bf (iii)~Approximate controllability to a point.}{\sl~There is a point $\hat u\in H$ with the following property:~for any $\e>0$ and any compact $\HH$ in $H$,   there is an~integer $n\ge1$ such that, for any initial point $u\in \HH$, there are  vectors~$\zeta_1, \ldots, \zeta_n\in \KK$ satisfying  }
\begin{equation}\label{1.3}
\| S_n(u; \zeta_1, \ldots, \zeta_n)- \hat u\|\le \es. 
\end{equation}

 {\bf (iv)~Approximate controllability of the linearisation.}{\sl~For any  $u\in H$ and $\eta\in\KK$,   the image of the linear mapping $(D_\eta S)(u,\eta):E\to H$ is dense~in~$H$.}

   \medskip

 {\bf (v)~Decomposability.}{\sl~The set $\KK$ is compact in $E$.~Moreover, there are   sequences of closed subspaces~$\{F_n\}$ and $\{G_n\}$ in~$E$ satisfying the following properties:}
\begin{itemize}{\sl
\item[$\bullet$]$F_n$ are finite-dimensional, 
  $F_n\subset F_{n+1}$ for any $n\ge1$, and  $E=\overline{\,\cup_nF_n\,}$.
\item[$\bullet$]
$E$ is the direct sum of the spaces~$F_n$ and~$G_n$, and the norms of the corresponding projections~${\mathsf P}_n$ and~${\mathsf Q}_n$   are bounded    in~$n\ge 1$. 
\item[$\bullet$]
  $\ell$ is the product of   projections ${\mathsf P}_{n*}\ell$ and~${\mathsf Q}_{n*}\ell$ for   any $n\ge 1$.~Moreover, the measures~${\mathsf P}_{n*}\ell$ have   $C^1$-smooth  densities  with respect to the Lebesgue measure~on~$F_n$.}
\end{itemize}
}

 System~\eqref{E:1.1} defines a family of Markov processes~$(u_k, \IP_u)$ in~$H$  parametrised by  the initial condition \eqref{E:1.2}. Let $\PPPP_k:C_b(H)\to C_b(H)$ and $\PPPP_k^*:\PP(H)\to \PP(H)$   be the associated  Markov operators. 
 \begin{theorem}\label{T:1.1}  
Under   Conditions {\rm(i)-(v)}, the family $(u_k,\IP_u)$
  has a unique stationary measure~$\mu\in\PP(H)$.~Moreover,
  for any compact set $\HH$ in $H$, there are numbers
  $C>0$ and~$c>0$ such~that 
\begin{equation}\label{E:mix}
\|\PPPP_k^*\la-\mu\|_{L(H)}^*
\le C  e^{-c k}, \quad    k\ge  1
\end{equation}for any initial measure  $\la\in \PP(H)$ with $\supp\la\subset \HH$.
  \end{theorem}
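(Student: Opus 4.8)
The plan is to follow the coupling/controllability scheme of Kuksin--Nersesyan--Shirikyan, but adapted to the situation where the resolving operator is no longer regularising to a compactly embedded space. The overall strategy has three logical layers: (1) reduce the mixing estimate to the existence of a compact set $\hat\HH\subset H$ that is invariant (in an appropriate sense) and attracting at a uniform geometric rate; (2) on such a compact set, establish a local coupling / recurrence estimate using the decomposability of the noise and the controllability of the linearisation; and (3) glue the local estimate with the global drift into $\hat\HH$ to obtain \eqref{E:mix}. The first step is where asymptotic compactness (Condition~(ii)) and approximate controllability to a point (Condition~(iii)) enter: they replace the usual compact attractor. The second step is the heart of the argument and is where the decomposition \eqref{reps1} of the linearised flow into a strict contraction plus a compact operator is used in place of global smoothing.

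More concretely, I would proceed as follows. First, using Conditions (ii) and (iii), show that the set of attainability $\hat\HH:=\aA(\{\hat u\})$ — or a suitable enlargement of it — is compact, is mapped into itself by the controlled dynamics, and absorbs any bounded set: given a compact $\HH$, Condition~(iii) steers $\HH$ $\e$-close to $\hat u$ in a number of steps $n=n(\e,\HH)$ uniform over $\HH$, and Condition~(ii) guarantees the limit points lie in a compact set, so one gets a uniform ``hitting a fixed compact neighbourhood of $\hat u$'' statement with positive probability (using non-degeneracy of the noise to realise a control that is $\e$-close to the steering control). Second, on $\hat\HH$, one establishes the key one-step (or $\ell$-step) \emph{coupling} estimate: there is a set of positive $\ell^{\otimes\ell}$-measure of noise realisations along which two trajectories started in $\hat\HH$ can be coupled so that their difference contracts. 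This is obtained by a Foia\c{s}--Prodi-type argument carried out in the noise space rather than in the phase space: write $S(u,\eta)-S(u',\eta)=\int_0^1 (D_uS)(u'+\theta(u-u'),\eta)\,\dd\theta\,(u-u')$, split $(D_uS)=\Psi_1+\Psi_2$ via \eqref{reps1}, and absorb the compact part $\Psi_2$ by a finite-dimensional correction of the noise — possible because (iv) gives density of the range of $D_\eta S$ and (v) gives a smooth finite-dimensional marginal on $F_n$ onto which one can push the correction via a Girsanov/image-measure (Kantorovich) argument. The strict bound $\varkappa<1$ on $\|\Psi_1\|$ then yields linear contraction of the coupled trajectories modulo a finite-rank term that is handled by the usual ``squeezing + recurrence'' iteration.

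Third, combine the pieces with the standard measure-theoretic machinery: from the local coupling estimate on $\hat\HH$ and the uniform drift of bounded sets into $\hat\HH$ one derives, for the coupled process, an estimate of the form $\E\, d(u_k,u_k')\le C e^{-ck}$ on an event of probability $\ge 1-Ce^{-ck}$, hence (after optimising) $\E\, (d(u_k,u_k')\wedge 1)\le Ce^{-ck}$; taking the infimum over couplings gives \eqref{E:mix} for $\la=\delta_u$, and then for general $\la$ supported in a compact $\HH$ by integration. Uniqueness of the stationary measure follows from \eqref{E:mix} applied to two stationary measures (using tightness, which is itself a consequence of (ii)-(iii)), plus a standard argument to remove the compact-support restriction on one of the measures.

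The main obstacle, as I see it, is the coupling step on $\hat\HH$ in the absence of compact smoothing. In the classical setting the regularising property lets one control the full difference $u_k-u_k'$ by a finite-dimensional (low-mode) difference via Foia\c{s}--Prodi; here one only has the algebraic decomposition \eqref{reps1}, so one must show that the compact operator $\Psi_2(u,\eta)$ — composed along a trajectory that lives in the compact set $\hat\HH$ — can be ``approximately cancelled'' by an admissible perturbation of the noise whose cost (in the sense of the density of ${\mathsf P}_{n*}\ell$) is controlled. Making this cancellation quantitative and uniform over $\hat\HH$, and checking that the residual (high-mode) part genuinely contracts at rate $\varkappa$, is the delicate point; compactness of $\hat\HH$ together with continuity of $(u,\eta)\mapsto\Psi_2(u,\eta)$ in the strong operator topology (from Condition~(i)) is what makes it possible to choose a single finite dimension $n$ that works uniformly, and this is precisely why asymptotic compactness is built into the hypotheses rather than a mere corollary.
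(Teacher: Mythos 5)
Your proposal is correct and follows essentially the same route as the paper: the compact part $\Psi_2$ of the decomposition \eqref{reps1} is cancelled by a finite-dimensional correction of the noise built from an approximate right inverse of $(D_\eta S)(u,\eta)$ (Conditions (iv)--(v)), the bound $\|\Psi_1\|_{\LL(H)}=\varkappa<1$ supplies the contraction, and Conditions (ii)--(iii) furnish the compact invariant set and the recurrence. The only difference is presentational: the paper packages the coupling/recurrence machinery you sketch into the verification of a single ``local stabilisation'' hypothesis and then invokes Theorem~1.1 of \cite{shirikyan-2018} as a black box.
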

  See the papers~\cite{shirikyan-asens2015,shirikyan-2018, KNS-2018, KNS-2019, KZ-2019, JNPS-2019} for related abstract  criteria for uniqueness of stationary measure and mixing.~The formulation of Theorem~\ref{T:1.1} is close to the results in~\cite{KNS-2019, JNPS-2019}, but the proof is based on a theorem obtained in~\cite{shirikyan-2018}.~There are two main  differences  in our formulation.~First,  in Condition~(i), we do not suppose   that the mapping~$S$ (or its linearisation) takes values in a space   that is compactly embedded~into~$H$. Instead, we    assume that~$(D_uS)(u,\eta)$ is~a~sum of    {\it dissipative}~and    {\it compact} operators.~The second difference is   
Condition~(ii), which allows to recover some compactness properties\footnote{The asymptotic  compactness is a well-known property in the study of the attractors for  deterministic PDEs  (e.g., see~\cite{MR1133627,  MR1491614}).} for the dynamics.~These two new conditions make  Theorem~\ref{T:1.1} applicable to the NS system  in unbounded~domains.~Moreover, this theorem can be applied to dissipative PDEs without parabolic regularisation (this  will be addressed in a
subsequent publication). 
   
    As in this paper the random perturbation is {\it non-degenerate}, the image of~the mapping $(D_\eta S)(u,\eta)$ is assumed to be dense   for any     $\eta$ in $\KK$;   degenerate versions of this criterion can also be envisaged    (cf. \cite{KNS-2018, KNS-2019}).   
  \begin{proof}[Proof of Theorem \ref{T:1.1}] 
{\it Step 1: Existence of stationary measure.}~The set~$X:=\aA(\HH)$ is compact in $H$.~Indeed, 
it suffices to show that   any sequence $\{v_n\}$   of the~form
$$
v_n=S_{l_n}(u_0^n;\zeta_1^n,\ldots,\zeta_{l_n}^n)
$$
with some  $u_0^n\in \HH$,  $\zeta_1^n,\ldots,\zeta_{l_n}^n\in \KK$, and   $l_n\ge1$, is precompact  in~$H$.~If~the sequence $\{l_n\}$ is bounded, then~$v_n\in \aA_m(\HH)$ for all $n\ge1$, where~$m=\max \{l_n\}$.~The fact  that $\aA_m(\HH)$ is compact (as image of a compact set by a continuous mapping) implies that $\{v_n\}$ is precompact.~In the case   $l_n\to \ty$, the conclusion follows from Condition~(ii).

 The compactness of $X$, combined with the invariance property~$S(X\times \KK)\subset X$ and the usual Bogolyubov--Krylov argument (e.g., see Section~2.5 in~\cite{KS-book}),  implies the existence of a stationary measure $\mu\in \PP(X)$.

 \smallskip
{\it Step 2: Limit~\eqref{E:mix}.}~According to        Theorem~1.1 in~\cite{shirikyan-2018}, limit~\eqref{E:mix}  will be established if we   verify the   following property. 
        
        \medskip
        
         {\bf  Local stabilisation.}{\sl~Let $\DDD_\delta:=\{(u,u')\in X\times X: \|u-u'\|\le\delta\}$.~For any $R>0$ and any compact~$\KK\subset E$, there is a finite-dimensional subspace~$\EE\subset  E$, and a continuous mapping 
$$
\varPhi:\DDD_\delta\times B_E(R)\to \EE, \quad (u,u',\eta)\mapsto \eta', 
$$ 
which is continuously differentiable in~$\eta$ and satisfies the inequalities
\begin{gather}
\sup_{\eta\in B_E(R)}
\left(\|\varPhi(u,u',\eta)\|_E+\|D_\eta\varPhi(u,u',\eta)\|_{\LL(E)}\right)
\le C\,\|u-u'\|,\label{E:1.5}\\
\sup_{\eta\in \KK}\| S(u,\eta)-S(u',\eta+\varPhi(u,u',\eta))\|
\le q\,\|u-u'\|, \quad (u,u')\in \DDD_\delta \label{E:sdfdfge}
\end{gather}
  for some positive constants~$C$, $\delta$, and~$q<1$.}
  
Let us show that   Conditions~{\rm(i)},~{\rm(iv)}, and (v) imply this local stabilisation property.~We use a construction  of approximate   right inverse for linear operators   from Section~2.2 in~\cite{KNS-2018}.~For any $u\in X$ and~$\eta\in E$, let   $A(u,\eta):E\to H $ be given by~$A(u,\eta):=(D_\eta S)(u,\eta)$.~Then
 $G(u,\eta):=A(u,\eta)\,A(u,\eta)^*:H\to H$ is non-negative self-adjoint operator  	and~$\text{Im} (G(u,\eta))$ is dense in $H$ by Condition~{\rm(iv)}. Thus $(G(u,\eta)+\gamma I)^{-1}$ is well defined for any~$\gamma>0$, and   we have the~limit
\begin{equation}\label{haks}
G(u,\eta)(G(u,\eta)+\gamma I)^{-1}f\to f\quad\mbox{as $\gamma\to0^+$}, \quad f\in H,
\end{equation}
by Lemma~2.4 in~\cite{KNS-2018}.~This shows that $A(u,\eta)^*(G(u,\eta)+\gamma I)^{-1}$ is an approximate 
right inverse for $A(u,\eta)$. We truncate it to obtain an operator with  finite-dimensional image:
$$
 \RR_{M,\gamma}(u,\eta)
:={\mathsf P}_MA(u,\eta)^*(G(u,\eta)+\gamma I)^{-1}, 
$$ where ${\mathsf P}_M$ is a projection as in Condition~(v)  and 
 $M\ge1$ and~$\gamma>0$  are~parameters that will be chosen later. It is straightforward to see that
\begin{equation} 
\bigl\| \RR_{M,\gamma}(u,\eta)\bigr\|_{\LL(H,E)}+\bigl\|(D_\eta  \RR_{M,\gamma})(u,\eta)\bigr\|_{\LL(H\times E,E)} \le C_1(R,M,\gamma)    \label{E:1.7}
\end{equation}  for any~$u\in X$ and $\eta\in B_E(R)$, where the constant $C_1(R,M,\gamma)>0$ does not depend on~$(u,\eta)$.~By the Taylor  formula, for any $u,u'\in X$ and $\eta,\eta'\in B_E(R)$,  we have
\begin{equation} \label{E:1.8}
S(u',\eta')-S(u,\eta)=(D_u S)(u,\eta)(u'-u)+(D_\eta S)(u,\eta)(\eta'-\eta)
+r(u,u',\eta,\eta') 
\end{equation}
where   
\begin{equation} \label{E:1.9}
\|r(u,u',\eta,\eta')\|\le C_2(R)\,\bigl(\|u-u'\|^2+\|\eta-\eta'\|_E^2\bigr). 
\end{equation}
  The mapping $\varPhi$ is defined by 
$$
\varPhi(u,u',\eta):=-  \RR_{M,\gamma}(u,\eta)\Psi_2(u,\eta)(u'-u). 
$$Then \eqref{E:1.5} is verified due to    \eqref{E:1.7} and   
\begin{equation}\label{psi2}
	  C_3:=\sup_{(u,\eta)\in X\times  B_E(R)}\left(\|\Psi_2(u,\eta)\|_{\LL(H)}+\|D_\eta\Psi_2(u,\eta)\|_{\LL(H\times E,H)} \right)   <\ty.   \end{equation}
 The fact that $C_3$ is finite follows from the   boundedness on $X\times  B_E(R)$ of the norms of the derivatives of  $  S(u,\eta)$  and the representation~\eqref{reps1}.~Assume that,   for any~$\e>0$, we are able to find    numbers $M\ge1$   and~$\gamma>0$ such that 
 \begin{equation}
\bigl\|(D_\eta S)(u,\eta)	\RR_{M,\gamma} (u,\eta)\Psi_2(u,\eta)g-\Psi_2(u,\eta)g\bigr\|   \le \e\, \|g\| \label{E:1.6} 
\end{equation}   for any $u\in X$, $\eta\in  \KK$, and  $g\in H$.~Then combining  \eqref{reps1}, \eqref{reps2}, and \eqref{E:1.8}-\eqref{E:1.6}, we~obtain
\begin{align*}  
\|S(u,\eta)&-S(u',\eta+\varPhi(u,u',\eta))\| \\
&\qquad \le  \varkappa\, \|u'-u\| +\e\,\|u'-u\|+\|r(u,u',\eta,\eta+\varPhi(u,u',\eta))\|\nonumber\\
&\qquad \le (\varkappa+\e + C_4(\e,R)\,\|u'-u\|  )\|u'-u\| 
\le q\,\|u'-u\|   
\end{align*} for any  $(u,u')\in \DDD_\delta$ and $\eta\in \KK$,
 where  $\e>0$ and $\delta>0$ are sufficiently small  and~$q<1$. 
 This completes the proof of  the  local stabilisation and limit~\eqref{E:mix}.  
  
 Let us  prove inequality \eqref{E:1.6}.~By~\eqref{haks},   the continuous dependence of the operators~$(D_\eta S)(u,\eta)$ and~$\Psi_2(u,\eta)$ on $(u,\eta)$, and  a simple   compactness argument, we find a large integer~$M\ge1$   and  a small number  $\gamma>0$ such that\footnote{Here we use  also the boundedness of the projections~${\mathsf P}_M$ (see Condition~(v)), which implies the limit ${\mathsf P}_M\to I$ as $M\to \ty$ in the   operator topology.} 
$$
\left\|(D_\eta S)(u,\eta)	\RR_{M,\gamma} (u,\eta)f-f\right\|   \le \e  
$$  for any $u\in X$, $\eta\in  \KK$, and  $f\in   \Psi_2(u,\eta)(B_H(1))$.~Then inequality \eqref{E:1.6} follows by   homogeneity.

\smallskip
{\it Step 3: Uniqueness of stationary measure.}~To complete the proof, it remains to show the uniqueness of   stationary measure in~$\PP(H)$.~Assume that $\mu   $ is the  stationary measure supported\footnote{Using Condition~(iii), it is easy to see that   $\supp \mu =\aA(\{\hat u\})$.} in $\aA(\{\hat u\})$, and let~$\la_1$  be   any stationary measure for $(u_k,\IP_u)$ in~$\PP(H)$.~As $\la_1(\aA(H))=1$,  there is a sequence $\{\HH_n\}$ of compacts  in $H$  such that 
$$
\la_1(\aA(\HH_n))>1-1/n \quad  \text{for any $n\ge1$}.
$$ By Condition~(iii), we have  $\aA(\{\hat u\})\subset \aA(\HH_n)$, and
$\mu$ and~$\la_1/ \la_1(\aA(\HH_n))$ are stationary measures for~$(u_k,\IP_u)$ in $\PP(\aA(\HH_n))$.~So~$\mu=\la_1/ \la_1(\aA(\HH_n))$ by the uniqueness of stationary measure on~$X=\aA(\HH_n)$. Therefore, 
$$
\la_1 (\Gamma)=\lim_{n\to \ty} \la_1 (\Gamma \cap \aA(\HH_n))=\lim_{n\to \ty} \la_1 (\Gamma \cap \aA(\HH_n))/ \la_1(\aA(\HH_n)) =\mu (\Gamma)
$$  for any $ \Gamma\in \BBBBB(H)$. Thus $\la_1=\mu$.
 \end{proof}

 \section{Proof of the Main Theorem}\label{S:2}
 
  In this section, we prove the Main Theorem by applying  Theorem~\ref{T:1.1}.~We   begin with a short discussion  of the deterministic   NS system,
  then turn to the verification of Conditions {\rm(i)-(v)} in an appropriate functional  setting.

 \subsection{Preliminaries}
 
Applying the Leray  projection\footnote{That is the orthogonal projection in $L^2(D,\R^2)$ onto $H$.} $\Pi$ to Eq.~\eqref{0.1}, we eliminate the pressure term  and consider the   evolution~system  
 \begin{equation} 
\dot u +  \nu Lu + B(u) = \eta,\label{1.1}
\end{equation} where  $L=-\Pi\Delta$ is the Stokes operator and $B(u)=\Pi (\lag u, \nabla \rag u)$.

 Let us define a bilinear   symmetric form $[\cdot,\cdot]:V\times V\to \R$ by
$$
[u,v]:=   \lag u,v\rag_1-  \frac{\la_1}{2}\lag u,v\rag, \quad u,v\in V.
$$The      Poincar\'e inequality (see~\eqref{0.4})   
\begin{equation}\label{0.9}
\|u\|^2\le \la_1^{-1} \|u\|_1^2,\quad u\in V
\end{equation}
 implies that  
\begin{equation}\label{0.10}
	\frac{1}{2}\|u\|_1^2 \le [u]^2:=[u,u]\le  \|u\|^2_1.
\end{equation}Thus $[\cdot,\cdot]$ defines a scalar product on $V$ with norm $[\cdot]$ equivalent to $\|\cdot\|_1$.
   \begin{proposition}\label{P:2.1} 
For any $T>0$,  $u_0\in H$, and $\eta\in  L^2(J_T,H)$,     there is  a unique solution $u\in C(J_T, H)\cap L^2(J_T,V)$  of problem \eqref{1.1},~\eqref{0.3}.~It satisfies the following inequalities 
 	\begin{align}
 	\|u(t)\|^2&\le e^{-\nu \la_1 t}\|u_0\|^2+\nu^{-2}\la_1^{-1} \|\eta\|_{L^2(J_T,V')}^2, \quad t\in J_T,  	\label{1.3} \\
		\|u\|_{L^2(J_T,V)}^2&\le  {\nu}^{-1}\|u_0\|^2+  {\nu^{-2}}  \|\eta\|_{L^2(J_T,V')}^2,\label{AAA1.3} 
 	\end{align}      and   the     equality  
 	\begin{equation}\label{1.4}
 		\|u(t)\|^2=e^{-\nu\la_1 t} \|u_0\|^2+2\int_0^te^{-\nu\la_1 (t-s)}\left(\lag\eta(s),u(s)\rag-\nu[u(s)]^2\right)\!\dd s, \quad t\in J_T.
 	\end{equation}

 	  \end{proposition}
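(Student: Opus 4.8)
The plan is to prove existence by the classical Galerkin method, which for a Poincar\'e domain proceeds almost exactly as in the bounded case, the only genuinely new point being the passage to the limit in the nonlinear term. First I would fix an orthonormal basis $\{e_j\}\subset\VV$ of $H$ whose linear span is dense in $V$ (possible since $\VV$ is dense in $V$), set $H_n=\lspan\{e_1,\dots,e_n\}$ and let $P_n$ be the orthogonal projection in $H$ onto $H_n$, and look for $u_n(t)=\sum_{j=1}^n c_j^n(t)e_j$ solving
\[
\lag\dot u_n,e_j\rag+\nu\lag u_n,e_j\rag_1+b(u_n,u_n,e_j)=\lag\eta,e_j\rag,\qquad u_n(0)=P_nu_0,\qquad j=1,\dots,n,
\]
where $b(v,w,z):=\int_D(\lag v,\nabla\rag w)\cdot z\dd x$. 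Since the nonlinearity is quadratic, this system of ODEs has a unique maximal solution.

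Next I would derive the a priori estimates. Testing the Galerkin system with $u_n$ and using the cancellation $b(u_n,u_n,u_n)=0$ (valid because $\diver u_n=0$ and $u_n|_{\p D}=0$), one gets $\tfrac12\tfrac{\Dd}{\Dd t}\|u_n\|^2+\nu\|u_n\|_1^2=\lag\eta,u_n\rag$; bounding the right-hand side by $\|\eta\|_{V'}\|u_n\|_1$, using the Poincar\'e inequality \eqref{0.9} in the form $\la_1\|u_n\|^2\le\|u_n\|_1^2$, Young's inequality, and Gronwall's lemma, one obtains bounds of the form \eqref{1.3} and --- after integrating in time --- \eqref{AAA1.3}, uniformly in $n$; these also show that $u_n$ is global. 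In particular $\{u_n\}$ is bounded in $L^\infty(J_T,H)\cap L^2(J_T,V)$.

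The main obstacle is the passage to the limit in $b(u_n,u_n,\cdot)$. In the unbounded domain $D$ the embedding $V\hookrightarrow H$ is not compact, so the usual global form of the Aubin--Lions lemma is unavailable, and I would instead use local compactness: for every bounded $\mathcal{O}\Subset D$ the embedding $V\hookrightarrow L^2(\mathcal{O},\R^2)$ is compact, and, together with the uniform $L^2(J_T,V)$ bound and the equicontinuity in the weak topology of $H$ of the family $\{u_n\}$ --- which follows from the Galerkin equations, since $t\mapsto\lag u_n(t),e_j\rag$ is bounded in $H^1(J_T)$ for each fixed $j$ --- a local version of the Aubin--Lions--Simon lemma gives, along a subsequence, $u_n\to u$ strongly in $L^2(J_T,L^2(\mathcal{O},\R^2))$ for every such $\mathcal{O}$, while $u_n\to u$ weakly-$\ast$ in $L^\infty(J_T,H)$ and weakly in $L^2(J_T,V)$. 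Since every $\phi\in\VV$ is compactly supported, this is enough to pass to the limit $b(u_n,u_n,\phi)\to b(u,u,\phi)$, so $u$ solves \eqref{1.1} and \eqref{0.3} in the weak sense, and by weak lower semicontinuity it inherits \eqref{1.3} and \eqref{AAA1.3}.

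It remains to upgrade the regularity of $u$ and to prove \eqref{1.4} and uniqueness. Using the two-dimensional Ladyzhenskaya inequality $\|v\|_{L^4}^2\le C\|v\|\,\|v\|_1$ (which holds on any domain, being a consequence of a one-dimensional calculus estimate and the density of $\VV$) one bounds $\|B(u(t))\|_{V'}\le C\|u(t)\|\,\|u(t)\|_1$, hence $\dot u=\eta-\nu Lu-B(u)\in L^2(J_T,V')$; by the Lions--Magenes lemma, $u\in C(J_T,H)$ and $\tfrac{\Dd}{\Dd t}\|u\|^2=2\lag\dot u,u\rag=2\lag\eta,u\rag-2\nu\|u\|_1^2$ for a.e.\ $t$. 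Rewriting $\|u\|_1^2=[u]^2+\tfrac{\la_1}{2}\|u\|^2$ and multiplying by the integrating factor $e^{\nu\la_1 t}$ yields \eqref{1.4}, from which \eqref{1.3} follows again by Young's inequality applied to $\lag\eta,u\rag-\nu[u]^2$. For uniqueness I would take two solutions $u,u'$, set $w=u-u'$, and test the difference equation $\dot w+\nu Lw+B(w,u)+B(u',w)=0$, $w(0)=0$ (with $B(v,z):=\Pi(\lag v,\nabla\rag z)$), against $w$: using $b(u',w,w)=0$ and $|b(w,u,w)|\le\|w\|_{L^4}^2\|u\|_1\le C\|w\|\,\|w\|_1\,\|u\|_1$ one gets $\tfrac{\Dd}{\Dd t}\|w\|^2\le\tfrac{C^2}{\nu}\|u(t)\|_1^2\,\|w\|^2$, and since $\|u(\cdot)\|_1^2\in L^1(J_T)$ by \eqref{AAA1.3}, Gronwall's lemma forces $w\equiv0$.
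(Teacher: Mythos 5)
Your proof is correct and follows essentially the same route as the paper, which delegates existence and uniqueness to the classical Galerkin construction in Chapter~III of Temam's book and then derives \eqref{1.3}, \eqref{AAA1.3}, and \eqref{1.4} from the energy balance $\frac{\Dd}{\Dd t}\|u\|^2+2\nu\|u\|_1^2=2\lag\eta,u\rag$ exactly as you do, including the rewriting $\|u\|_1^2=[u]^2+\tfrac{\la_1}{2}\|u\|^2$ and the integrating factor $e^{\nu\la_1 t}$. The one point genuinely specific to the unbounded domain --- replacing global Aubin--Lions compactness by compactness on the truncated domains $D_R=D\cap B_{\R^2}(R)$ together with a diagonal argument --- is handled correctly in your sketch and is the same device the paper employs in its proofs of Proposition~\ref{P:2.2} and Lemma~\ref{L:2.4}.
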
   
 	Let us   define the mapping 
$$
S_t:H\times L^2(J_T,H)\to H, \quad (u_0,\eta)\mapsto u(t), \quad t\in J_T.
$$  	The following   stability result holds.
 \begin{proposition}\label{P:2.2}
 Assume that the sequence $\{u_0^n\}$ converges   weakly to $u_0$ in $H$ and the sequence $\{\eta^n\}$ converges   strongly to $\eta$ in $L^2(J_T,H)$.~Then
 \begin{align}
 		S_t(u_0^n,\eta^n)\rightharpoonup S_t(u_0,\eta) \quad &\text{weakly in $H$ for $t\in J_T$},\label{E:A}\\
 		S_\cdot(u_0^n,\eta^n)\rightharpoonup S_\cdot(u_0,\eta) \quad &\text{weakly in $L^2(J_T,V)$}\label{E:B}.
 \end{align}
 \end{proposition}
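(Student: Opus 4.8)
The plan is to exploit the compactness/weak-continuity that is built into the variational formulation of the Navier--Stokes system, in the style of Lions--Temam. First I would establish uniform bounds: since $\{u_0^n\}$ converges weakly in $H$ it is bounded, and $\{\eta^n\}$ converges strongly in $L^2(J_T,H)$, hence is bounded there, so Proposition~\ref{P:2.1} (the energy estimates \eqref{1.3}, \eqref{AAA1.3}) gives a bound on $u^n:=S_\cdot(u_0^n,\eta^n)$ that is uniform in $n$ in $C(J_T,H)\cap L^2(J_T,V)$. Writing the equation $\dot u^n=-\nu Lu^n-B(u^n)+\eta^n$, the standard estimate $\|B(u)\|_{V'}\le C\|u\|\,\|u\|_1$ in dimension two (together with the $L^2(J_T,V)$ bound and the $L^\infty(J_T,H)$ bound) shows $\{\dot u^n\}$ is bounded in $L^{4/3}(J_T,V')$, or at least in $L^1(J_T,V')$. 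This is the input for an Aubin--Lions--Simon compactness argument.

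Next I would extract a subsequence (not relabelled) such that $u^n\rightharpoonup v$ weakly in $L^2(J_T,V)$, $u^n\overset{*}{\rightharpoonup} v$ weak-$*$ in $L^\infty(J_T,H)$, and, by Aubin--Lions applied with the compact embedding $V\hookrightarrow H$ valid on a Poincar\'e domain only locally (one works on $D\cap B_R$ for each $R$ and diagonalises, or uses the local compact embedding together with the decay/tightness coming from the energy estimate to upgrade to strong convergence in $L^2(J_T,H)$), $u^n\to v$ strongly in $L^2(J_T,H_{\mathrm{loc}})$, which suffices to pass to the limit in the nonlinear term: for every test function $\varphi\in C^1(J_T,\VV)$, $\int_0^T\lag B(u^n),\varphi\rag\to\int_0^T\lag B(v),\varphi\rag$, using the trilinear form identity $\lag B(u),\varphi\rag=-\int_D (u\otimes u):\nabla\varphi$, the compact support of $\varphi$, and the strong local convergence. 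The linear terms pass to the limit by weak convergence, and the strong convergence of $\eta^n$ in $L^2(J_T,H)$ handles the forcing. One also checks the initial condition: testing with $\varphi$ vanishing at $t=T$ and integrating by parts, the weak limit of the distributional time derivatives forces $v(0)=\text{w-}\lim u_0^n=u_0$; combined with the fact that $v\in C(J_T,H_w)$ (weakly continuous in time, from $v\in L^\infty(J_T,H)$ and $\dot v\in L^{4/3}(J_T,V')$). Hence $v$ is a weak solution of \eqref{1.1}, \eqref{0.3} with datum $(u_0,\eta)$; by the uniqueness part of Proposition~\ref{P:2.1}, $v=S_\cdot(u_0,\eta)$. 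Since every subsequence has a further subsequence converging to this same limit, the whole sequence converges, giving \eqref{E:B}; and from $u^n\in C(J_T,H_w)$ with uniform bounds plus the equation, $u^n(t)\rightharpoonup v(t)$ in $H$ for each fixed $t\in J_T$, which is \eqref{E:A}.

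The main obstacle is the loss of global compactness of $V\hookrightarrow H$ on an unbounded domain: one cannot invoke Aubin--Lions directly to get strong convergence in $L^2(J_T,H)$. The remedy is that for the nonlinear term only strong convergence on compact subsets of $D$ is needed (since the natural test functions live in $\VV$, hence are compactly supported), and local compactness $H^1(D\cap B_R)\hookrightarrow L^2(D\cap B_R)$ together with the global $L^2(J_T,V)$ bound recovers exactly that via a diagonal argument over $R\to\infty$. A secondary technical point is justifying the weak-in-$H$ convergence pointwise in $t$ (not just a.e.\ $t$): this follows because the $u^n$ are equicontinuous as maps $J_T\to H_w$, uniformly in $n$, thanks to the uniform bound on $\dot u^n$ in $L^{4/3}(J_T,V')$ — so the pointwise weak limit exists for every $t$ and equals $v(t)$.
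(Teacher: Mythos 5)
Your proposal is correct and follows essentially the same route as the paper's proof: uniform bounds from Proposition~\ref{P:2.1}, a bound on $\dot u^n$ in $L^p(J_T,V')$ via $\|B(u)\|_{V'}\le C\|u\|\,\|u\|_1$ (the paper in fact gets $p=2$, but your weaker exponent suffices), Aubin--Lions on the truncated domains $D\cap B_R$ with a diagonal extraction to obtain local strong convergence, identification of the limit by uniqueness, and equicontinuity of $t\mapsto\lag u^n(t),\varphi\rag$ for $\varphi\in\VV$ to get the pointwise weak convergence \eqref{E:A}. No substantive differences.
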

 The proofs of these two  propositions are  carried out by standard methods and are given in the Appendix. 
 
  Let us now describe     the functional  setting in which    Theorem~\ref{T:1.1} is applied.~The space $H$ is defined by \eqref{N:0.9}, $E:= L^2([0,1],H)$, and the mapping  
$$
S:=S_1:H\times E\to H, \quad (u_0, \eta)\mapsto u(1)
$$ is the time-one shift   along trajectories of Eq.~\eqref{1.1}.~Then the   restriction to    integer times of the   solution of~\eqref{1.1},~\eqref{0.3},~\eqref{0.7}    satisfies    
\begin{equation}	\label{SSD1.5}
u_k=S(u_{k-1},\eta_k), \quad k\ge1,	
\end{equation}  where 
  $\{\eta_k\}$ is a sequence of i.i.d. random variables
   as in the decomposability condition in the Introduction.~It is straightforward to see that  Condition~(v)   satisfied.\footnote{Note that  $\KK:=\supp\DD(\eta_k)$ is compact in $E$, since   it is   contained in a Hilbert~cube.}  
In the   next three subsections, we   check Conditions~(i)-(iv).

  \subsection{Regularity condition} \label{SS:2.2}

   The smoothness of the mapping $S:H\times E\to H$ and the boundedness  of its derivatives on bounded subsets of $H\times E$ are proved using well-known methods    (e.g., see  Chapters~I and~VII in~\cite{BV1992} for the case of  bounded domain $D$).
   
Let  us consider the  linearisation~of~Eq.~\eqref{1.1} around the~trajectory $\tilde u(t)=S_t(u,\eta)$ corresponding to an  initial condition~$\tilde u(0)= u\in  H$ and  control~$\eta\in \KK$:
 \begin{gather}
  \dot w + \nu  Lw + Q(\tilde u,w) =0, \label{3.3C}\\
 	w(0)=w_0,   \label{3.3CD}
 \end{gather} where
\begin{equation}\label{3.4C}
Q(a,b)=\Pi (\lag a, \nabla \rag b)+\Pi (\lag b, \nabla \rag a).
\end{equation}Then $(D_u S)(u,\eta)w_0 = w(1)$ for any $w_0\in H$, and we can write  $w=v_1+v_2$, where~$v_1$ and $v_2$ are the solutions of the problems 
\begin{align}
	\dot v_1 +  \nu  L v_1  &=0, \quad v_1(0)=w_0, \label{E:AAA}\\
	\dot v_2 +  \nu Lv_2 + Q(\tilde u,w)& =0, \quad  v_2(0)=0. \label{E:BBB}
\end{align}The   representation \eqref{reps1} holds  with  the linear operators 
 \begin{align*}
	\Psi_1 &:H\to H, \quad  w_0\mapsto v_1(1), \\
\Psi_2 (u,\eta)&:H\to H, \quad  w_0\mapsto v_2(1).
\end{align*}    
 Inequality 
$$
\|v_1(t)\| \le   e^{-\nu \lambda_1 t/2}\|w_0\|
$$ implies   \eqref{reps2} with $\varkappa:=e^{-\nu \lambda_1/2}$.  The following lemma (whose proof  is given in the Appendix)  completes the verification     of~Condition (i). 
\begin{lemma}\label{L:2.4}For   any~$(u,\eta)\in H\times \KK$, 
the operator~$\Psi_2 (u,\eta):H\to H$ is compact.
\end{lemma}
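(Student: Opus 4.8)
I would prove that $\Psi_2(u,\eta)$ maps weakly convergent sequences to strongly convergent ones, which for a bounded linear operator on the Hilbert space $H$ is equivalent to compactness. Fix $(u,\eta)\in H\times\KK$, write $\tilde u(\cdot)=S_\cdot(u,\eta)\in C(J_1,H)\cap L^2(J_1,V)$ with $J_1=[0,1]$, and recall that $\Psi_2(u,\eta)g_0=v_2(1)$, where $w=v_1+v_2$ solves the linearisation \eqref{3.3C}--\eqref{3.3CD} with $w_0=g_0$ and $v_1,v_2$ solve \eqref{E:AAA}--\eqref{E:BBB}. Since $\Psi_2(u,\eta)$ is linear, it suffices to show that $g_n\rightharpoonup0$ in $H$ implies $v_2^{(n)}(1)\to0$ in $H$, where $w^{(n)},v_1^{(n)},v_2^{(n)}$ correspond to $w_0=g_n$. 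The energy estimate for \eqref{3.3C} (absorbing the term $\langle\langle w^{(n)},\nabla\rangle\tilde u,w^{(n)}\rangle$ via the two-dimensional inequality $\|f\|_{L^4}\le C\|f\|^{1/2}\|f\|_1^{1/2}$ and Gr\"onwall's lemma) and the bound $\|Q(\tilde u,w^{(n)})\|_{V'}\le C\|\tilde u\|_{L^4}\|w^{(n)}\|_{L^4}$ yield uniform bounds $\|w^{(n)}\|_{C(J_1,H)}+\|w^{(n)}\|_{L^2(J_1,V)}+\|\dot w^{(n)}\|_{L^2(J_1,V')}\le C$ and, together with the corresponding bounds for $v_1^{(n)}$, also $\|v_2^{(n)}\|_{C(J_1,H)}+\|v_2^{(n)}\|_{L^2(J_1,V)}\le C$, where $C$ depends only on $\nu$ and $\tilde u$.

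\textbf{Local compactness.} First I would show that $w^{(n)}\to0$ strongly in $L^2(J_1,L^2(D_R))$ for every $R>0$, where $D_R:=\{x\in D:|x|<R\}$. Indeed $\{w^{(n)}|_{D_R}\}$ is bounded in $L^2(J_1,H^1(D_R))$, so the averages $\int_{t_1}^{t_2}w^{(n)}\dd s$ are bounded in $H^1(D_R)$, hence precompact in $L^2(D_R)$ by Rellich's theorem; moreover, using $\|w^{(n)}(t{+}h)-w^{(n)}(t)\|^2\le\|w^{(n)}(t{+}h)-w^{(n)}(t)\|_1\,\|w^{(n)}(t{+}h)-w^{(n)}(t)\|_{V'}$ and $\|w^{(n)}(t{+}h)-w^{(n)}(t)\|_{V'}\le h^{1/2}\|\dot w^{(n)}\|_{L^2(J_1,V')}$, one gets $\sup_n\|w^{(n)}(\cdot{+}h)-w^{(n)}\|_{L^2([0,1-h],L^2(D_R))}\le Ch^{1/4}\to0$. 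By the Aubin--Lions--Simon compactness criterion, $\{w^{(n)}\}$ is precompact in $L^2(J_1,L^2(D_R))$; a diagonal extraction over $R\in\N$ gives a subsequence with $w^{(n)}\to\bar w$ in all these spaces, $w^{(n)}\rightharpoonup\bar w$ in $L^2(J_1,V)$, and $\dot w^{(n)}\rightharpoonup\dot{\bar w}$ in $L^2(J_1,V')$, so $\bar w\in C(J_1,H)$. Testing \eqref{3.3C} against $\phi\in\VV$ and using $\diver\tilde u=\diver w^{(n)}=0$ to rewrite the nonlinear term as $-\langle\tilde u\otimes w^{(n)}+w^{(n)}\otimes\tilde u,\nabla\phi\rangle$ (which involves $w^{(n)}$ without derivatives and is continuous for the local $L^2$ topology, since $\tilde u\in C(J_1,L^2)$), I can pass to the limit and obtain that $\bar w$ solves \eqref{3.3C} with $\bar w(0)=0$; the last point follows because $\langle w^{(n)}(0),\phi\rangle=\langle g_n,\phi\rangle\to0$ while the functions $t\mapsto\langle w^{(n)}(t),\phi\rangle$ are equibounded in $C^{1/2}(J_1)$. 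Uniqueness for \eqref{3.3C}--\eqref{3.3CD} forces $\bar w\equiv0$, and since this limit does not depend on the subsequence, the whole sequence satisfies $w^{(n)}\to0$ in $L^2(J_1,L^2(D_R))$ for every $R$.

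\textbf{From local to strong convergence via an energy identity.} Since $\dot v_2^{(n)}=-\nu Lv_2^{(n)}-Q(\tilde u,w^{(n)})\in L^2(J_1,V')$ (here one uses the divergence form of the nonlinearity to see that $Q(\tilde u,w^{(n)})\in L^2(J_1,V')$), the function $v_2^{(n)}$ lies in $C(J_1,H)$ and satisfies
\[
\tfrac12\|v_2^{(n)}(1)\|^2+\nu\int_0^1\|v_2^{(n)}\|_1^2\dd s=-\int_0^1\langle Q(\tilde u,w^{(n)}),v_2^{(n)}\rangle\dd s .
\]
Given $\e>0$, write $\tilde u=\tilde u^\#+r$ with $\tilde u^\#$ smooth, divergence-free and compactly supported in space, say $\tilde u^\#(t)\in\VV$ for all $t$ with $\supp\tilde u^\#(t)\subset K\Subset D$, and $\|r\|_{C(J_1,H)}+\|r\|_{L^2(J_1,V)}\le\e$; this is possible because $\tilde u\in C(J_1,H)\cap L^2(J_1,V)$. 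For the $\tilde u^\#$ part, writing $Q(\tilde u^\#,w^{(n)})=\Pi\diver(\tilde u^\#\otimes w^{(n)}+w^{(n)}\otimes\tilde u^\#)$ and integrating by parts against $v_2^{(n)}\in V$,
\[
\Bigl|\int_0^1\langle Q(\tilde u^\#,w^{(n)}),v_2^{(n)}\rangle\dd s\Bigr|\le C\,\|\tilde u^\#\|_{C(J_1,L^\infty)}\,\|w^{(n)}\|_{L^2(J_1,L^2(K))}\,\|v_2^{(n)}\|_{L^2(J_1,V)}\longrightarrow0
\]
as $n\to\infty$, by the local compactness step. For the $r$ part, the inequalities $\|r\|_{L^4}\le C\|r\|^{1/2}\|r\|_1^{1/2}\le C\e^{1/2}\|r\|_1^{1/2}$ and $\|w^{(n)}\|_{L^4}\le C\|w^{(n)}\|^{1/2}\|w^{(n)}\|_1^{1/2}\le C\|w^{(n)}\|_1^{1/2}$ give $\|Q(r,w^{(n)})\|_{V'}\le C\|r\|_{L^4}\|w^{(n)}\|_{L^4}\le C\e^{1/2}\|r\|_1^{1/2}\|w^{(n)}\|_1^{1/2}$, whence, by H\"older with exponents $(4,4,2)$,
\[
\Bigl|\int_0^1\langle Q(r,w^{(n)}),v_2^{(n)}\rangle\dd s\Bigr|\le C\e^{1/2}\|r\|_{L^2(J_1,V)}^{1/2}\|w^{(n)}\|_{L^2(J_1,V)}^{1/2}\|v_2^{(n)}\|_{L^2(J_1,V)}\le C\e .
\]
Combining the two bounds, $\limsup_{n\to\infty}\|v_2^{(n)}(1)\|^2\le C\e$, and letting $\e\to0$ gives $v_2^{(n)}(1)\to0$ in $H$, i.e.\ $\Psi_2(u,\eta)$ is compact.

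\textbf{Where the difficulty lies.} The argument departs from the usual ``regularise into a compactly embedded space'' scheme precisely because in an unbounded domain $V$ is \emph{not} compactly embedded in $H$, so neither $w^{(n)}$ nor $v_2^{(n)}$ is confined a priori to a compact set. This is circumvented by keeping only the local-in-space compactness of the linearised flow (available through Rellich's theorem on the bounded pieces $D_R$) and then using the energy identity to replace the drift $\tilde u$ by a compactly supported smooth field up to an arbitrarily small error, so that the residual nonlinear interaction only feels $w^{(n)}$ on a fixed bounded set $K$, where the local convergence applies. I expect the bookkeeping in this last step — in particular checking, via the divergence form of the nonlinearity, that $Q(\tilde u,w^{(n)})\in L^2(J_1,V')$ so that the energy identity is licit, and that the exponents in the H\"older estimate for the $r$ part close — to be the only genuinely delicate part.
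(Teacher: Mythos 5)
Your proof is correct and rests on the same two ideas as the paper's: local compactness of the linearised flow on the truncated domains $D_R$ via the Aubin--Lions theorem (with a diagonal extraction over $R$), and the splitting of the drift $\tilde u$ into a smooth compactly supported field plus a remainder that is small in $L^2(J_1,V)$, the remainder being absorbed through the interpolation bound $\|\lag a,\nabla\rag b\|_{H^{-1}}\le C\|a\|^{1/2}\|a\|_1^{1/2}\|b\|^{1/2}\|b\|_1^{1/2}$. The only structural difference is the packaging of the conclusion: the paper isolates the bilinear map $\Phi^{\tilde u}\colon w_0\mapsto \lag\tilde u,\nabla\rag w+\lag w,\nabla\rag\tilde u$, proves it is compact from $H$ into $L^2(J_1,H^{-1})$ by showing bounded sequences are mapped to Cauchy sequences, and then writes $\Psi_2$ as the composition of $\Phi^{\tilde u}$ with the continuous solution operator of the forced Stokes problem \eqref{E:BBB}; you instead establish weak-to-strong sequential continuity of $\Psi_2$ directly through the energy identity for $v_2^{(n)}$, which additionally requires identifying the local limit of $w^{(n)}$ as the solution with zero data (a step the paper's factorisation avoids, since it never needs to know what the limit is, only that the images form a Cauchy sequence). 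Both routes work; the paper's is marginally more economical.

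One small remark: the simultaneous requirement $\|r\|_{C(J_1,H)}\le\e$ in your decomposition of $\tilde u$ is more than you need and is the only slightly delicate point of the construction; it suffices to take $\|r\|_{L^2(J_1,V)}\le\e$ together with the trivial bound $\|r\|_{C(J_1,H)}\le M$, which turns your final estimate into $C\e^{1/2}$ instead of $C\e$ and still closes the argument --- this is exactly the form of approximation used in the paper (inequality \eqref{EEASDF}).
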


 \subsection{Asymptotic compactness}\label{S:2.2}

   For any integer  $k\ge1$ and any function   $\eta\in L^2(J_k, H)$    of the form~\eqref{0.7} with some sequence $\{\eta_n\}\subset E$,  we    write
	\begin{equation}\label{1.6}
	u(t)=S_t(u_0,\eta)=:S_t(u_0;\eta_1,\ldots,\eta_k), \quad t\in J_k.
\end{equation}
  \begin{proposition}\label{P:2.3}
  For any bounded sequence $\{u_0^n\}$ in $H$, any sequence of integers $l_n\ge1$ such that $l_n\to \ty$, and any family $\{\zeta_m^n: m,n\ge1\}\subset \KK$, the sequence 
  $$
  v_n=S_{l_n}(u_0^n; \zeta_1^n,\ldots,\zeta_{l_n}^n)
  $$ is precompact in $H$.
\end{proposition}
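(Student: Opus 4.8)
The plan is to prove this by the energy--equation method (in the spirit of Ball and Rosa): instead of extracting compactness from a compact Sobolev embedding, which is unavailable on the unbounded domain~$D$, one extracts strong convergence directly from the energy balance~\eqref{1.4}. Write $u^n(\cdot)=S_\cdot(u_0^n;\zeta_1^n,\ldots,\zeta_{l_n}^n)$, so that $v_n=u^n(l_n)$. Since precompactness in~$H$ is equivalent to ``every subsequence has a strongly convergent sub-subsequence'', it suffices, after passing to a subsequence, to produce a further subsequence converging in~$H$. First I would record a uniform bound: iterating the energy estimates of Proposition~\ref{P:2.1} over the unit time intervals and using that $\KK$ is bounded in~$E$ gives
$$
\|u^n(k)\|^2\le e^{-\nu\la_1 k}\|u_0^n\|^2+K_0,\qquad 0\le k\le l_n ,
$$
with a constant $K_0$ independent of $n$; in particular $\|v_n\|\le R_0$ for all~$n$, so along a subsequence $v_n\rightharpoonup v$ weakly in~$H$. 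Because weak convergence together with convergence of norms implies strong convergence in the Hilbert space~$H$, and $\|v\|\le\liminf_n\|v_n\|$ holds automatically, everything reduces to proving the bound $\limsup_n\|v_n\|^2\le\|v\|^2$.

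Here is how I would obtain this bound. Fix an integer $T\ge1$; discarding finitely many terms, assume $l_n\ge T$, and set $y^n(s):=u^n(s+l_n-T)$ for $s\in J_T$. Then $y^n$ solves~\eqref{1.1} on $J_T$ with initial datum $y^n(0)=u^n(l_n-T)$, which satisfies $\|y^n(0)\|\le R_0$ by the estimate above, and with a forcing $\xi^n\in L^2(J_T,H)$ assembled as in~\eqref{0.7} from the $T$-tuple $(\zeta_{l_n-T+1}^n,\ldots,\zeta_{l_n}^n)\in\KK^T$, while $y^n(T)=v_n$. Since $\KK$ is compact in~$E$, so is $\KK^T$ in $E^T$; hence, passing to a further subsequence, these tuples converge, i.e.\ $\xi^n\to\bar\xi$ strongly in $L^2(J_T,H)$ for some $\bar\xi\in L^2(J_T,H)$, and $y^n(0)\rightharpoonup\hat z$ weakly in~$H$ for some $\hat z\in H$. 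Proposition~\ref{P:2.2} then yields $y^n(\cdot)\rightharpoonup y(\cdot):=S_\cdot(\hat z,\bar\xi)$ weakly in $L^2(J_T,V)$ and $y^n(s)\rightharpoonup y(s)$ in~$H$ for each $s\in J_T$; in particular $v_n=y^n(T)\rightharpoonup y(T)$, so $v=y(T)$ by uniqueness of the weak limit.

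It remains to pass to the limit in the energy equality~\eqref{1.4} written for $y^n$ on $J_T$,
$$
\|v_n\|^2=e^{-\nu\la_1 T}\|y^n(0)\|^2+2\int_0^T e^{-\nu\la_1(T-s)}\bigl(\lag\xi^n(s),y^n(s)\rag-\nu[y^n(s)]^2\bigr)\dd s .
$$
The first term satisfies $e^{-\nu\la_1 T}\|y^n(0)\|^2\le e^{-\nu\la_1 l_n}\|u_0^n\|^2+e^{-\nu\la_1 T}K_0$, whose first summand tends to $0$ because $l_n\to\ty$ and $\{u_0^n\}$ is bounded. In the second term, $\xi^n\to\bar\xi$ strongly and $y^n\rightharpoonup y$ weakly in $L^2(J_T,H)$, so the bilinear part converges to $2\int_0^T e^{-\nu\la_1(T-s)}\lag\bar\xi(s),y(s)\rag\dd s$; and the functional $w\mapsto\int_0^T e^{-\nu\la_1(T-s)}[w(s)]^2\dd s$ is convex and continuous, hence weakly lower semicontinuous, on $L^2(J_T,V)$ (the weight is positive and $[\cdot]$ is a norm equivalent to $\|\cdot\|_1$ by~\eqref{0.10}), so $\liminf_n$ of its values at $y^n$ is at least its value at $y$. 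Combining these three facts,
$$
\limsup_n\|v_n\|^2\le e^{-\nu\la_1 T}K_0+2\int_0^T e^{-\nu\la_1(T-s)}\bigl(\lag\bar\xi(s),y(s)\rag-\nu[y(s)]^2\bigr)\dd s .
$$
By~\eqref{1.4} applied to $y=S_\cdot(\hat z,\bar\xi)$ itself, the second term on the right equals $\|y(T)\|^2-e^{-\nu\la_1 T}\|\hat z\|^2$, and $y(T)=v$; therefore $\limsup_n\|v_n\|^2\le\|v\|^2+e^{-\nu\la_1 T}K_0$. Since $T\ge1$ was arbitrary, letting $T\to\ty$ gives $\limsup_n\|v_n\|^2\le\|v\|^2$, which completes the argument. (The $T$-dependence of the extraction is harmless: one first fixes a subsequence along which $\|v_n\|^2$ tends to its $\limsup$ and $v_n\rightharpoonup v$, then performs the above extraction inside it for each $T$; the resulting inequality, being a statement about that fixed limit, holds for every~$T$.)

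The main obstacle is the one emphasised in the introduction: on an unbounded domain the embedding $V\hookrightarrow H$ is not compact, so even a uniform bound of $\{v_n\}$ in~$V$ (obtainable from parabolic smoothing on the last time step) would not imply precompactness in~$H$, and the bounded-domain argument cannot be mimicked. The device replacing it is the sharp energy identity~\eqref{1.4} with its exponential weight, which allows weak convergence to be upgraded to strong convergence; within this scheme the only genuinely delicate points are the time shift by $l_n-T$ (which makes the contribution of the initial datum disappear in the limit, and is where the hypothesis $l_n\to\ty$ is used) together with the bookkeeping of the nested subsequences described above.
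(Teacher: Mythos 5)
Your proof is correct and follows essentially the same route as the paper's: a uniform bound from the dissipative energy estimate, extraction of a weak limit, and then an upgrade to strong convergence by passing to the limit in the weighted energy equality~\eqref{1.4} over a window of length $T$ ending at $l_n$, using strong convergence of the shifted controls (compactness of $\KK$), Proposition~\ref{P:2.2}, and weak lower semicontinuity of the weighted $V$-norm. The only cosmetic difference is that the paper runs a single diagonal extraction valid for all window lengths $m$ simultaneously, whereas you take a $T$-dependent sub-subsequence and correctly note that this is harmless.
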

\begin{proof} 
Let us first explain the scheme of the proof.~Using the dissipativity of the system, we   show  that $\{v_n\}$ is bounded in $H$.~As $H$ is a Hilbert space,   there is a subsequence $\{v_{k_n}\}$ such that  
\begin{equation}\label{1.7}
	v_{k_n}\rightharpoonup w  \quad \text{weakly in $H$}.
\end{equation}
Hence, we have  
\begin{equation}\label{1.8}
\|w\|\le \liminf_{n\to \ty} \|v_{k_n}\|.	
\end{equation} 
On the other hand,    using   energy equality~\eqref{1.4} and Proposition~\ref{P:2.2},    we show~that     
\begin{equation}\label{1.9}
\limsup_{n\to \ty} \|v_{k_n}\|\le \|w\|.	
\end{equation}  
Inequalities \eqref{1.8} and \eqref{1.9} together imply that 
$$
	v_{k_n}\to w  \quad \text{strongly in $H$},
$$which gives the required result.  

\smallskip
{\it Step~1:~Boundedness.}  Let us show that, for any bounded set $\HH\subset H$, the attainability set $\aA(\HH)\subset H$ is bounded. Indeed, let        
$$
M_1:=\sup_{u\in \HH}\|u\|^2<\ty. 
$$
For any $u_0\in \HH$ and $\zeta_1, \ldots, \zeta_k\in \KK$, let   $u_k:=S_k(u_0;\zeta_1,\ldots,\zeta_k)$.~Then  by inequality~\eqref{1.3}, we~have   
$$
\|u_k\|^2\le \varkappa \|u_{k-1}\|^2+M_2, 
$$where $\varkappa:=e^{-\nu\la_1 }<1$ and $M_2:=\nu^{-2}\la_1^{-1} \sup_{\eta\in \KK}\|\eta\|_{L^2([0,1],V')}^2$.~Iterating this,~we~get 
$$
\|u_k\|^2 \le  \varkappa^k \|u_0\|^2+M_2(1-\varkappa)^{-1}\le M_1+M_2(1-\varkappa)^{-1}=:M.
$$
This shows that
$$
\sup_{u\in \aA(\HH)}\|u\|^2\le M.
$$

\smallskip
{\it Step~2:~Proof of~\eqref{1.9}.}~From the previous step it follows that $\{v_n\}$ is bounded in $H$.~Let~$\{v_{k_n}\}$ be a   subsequence verifying   \eqref{1.7}. It is  of the~form
$$
v_{k_n}=S_{\ell_n}(u_0^n;\eta_1^n,\ldots,\eta_{\ell_n}^n)
$$
for some vectors  $\eta_1^n,\ldots,\eta_{\ell_n}^n\in \KK$  and  integers $\ell_n\ge1$ such that $\ell_n\to \ty$.~Using the boundedness of the set~$\aA(\{u_0^n, n\ge1\})$ and  
   passing to a subsequence if necessary (applying the diagonal
process), we can assume that 
\begin{equation}\label{E:C}
w_{n,m}:=S_{\ell_n-m}(u_0^n;\eta_1^n,\ldots,\eta_{\ell_n-m}^n)\rightharpoonup w_m  \quad \text{weakly in $H$}
\end{equation}
for any $m\ge1$ and  some $w_m\in H$.~Using the compactness of $\KK$ and again passing to a  subsequence if necessary, we can assume that
$$
\eta^n_{\ell_n-i}\to \xi_{m-i}^m \quad \text{strongly in $E$, $i= 0, \ldots m-1$.}
$$Combining this with \eqref{E:C} and Proposition~\ref{P:2.2},  we~get
$$
  v_{k_n}= S_m(w_{n,m};\eta^n_{\ell_n-m+1}, \ldots,\eta^n_{\ell_n}) \rightharpoonup S_m(w_m;\xi^{m}_1, \ldots,\xi^{m}_m)  \quad \text{weakly in $H$}.
$$From \eqref{1.7} we derive  the equality
$$
w=	S_m(w_m;\xi^{m}_1, \ldots,\xi_{m}^m) \quad \text{for any $m\ge1$}.
$$
This and   \eqref{1.4} imply that 
	\begin{align}\label{E:1.4}
 		\|w\|^2&=\|u^m(m)\|^2\nonumber\\&= e^{-\nu\la_1 m}\|w_m\|^2+2\int_0^me^{-\nu\la_1 (m-s)}\left(\lag\xi^m(s),u^m(s)\rag-\nu[u^m(s)]^2\right)\!\dd s,
 	\end{align}
where
$$ u^m(t)= S_t(w_m,\xi^m), \quad\quad \xi^m(t)=\sum_{k=1}^m \I_{[k-1,k)}(t)\xi^{m}_k(t-k+1), 
 \quad t\in J_m.
$$
On the other hand, again by    \eqref{1.4}, we have  
	\begin{align}\label{EE:E1.4}
 		\|v_{k_n}\|^2&=\!\|u^{n,m}(m)\|^2\nonumber\\&=\!e^{-\nu\la_1 m}\|w_{n,m}\|^2\!+\!2\int_0^m\!\!\!e^{-\nu\la_1 (m-s)}\!\left(\lag\eta^{n,m}(s),u^{n,m}(s)\rag\!-\!\nu[u^{n,m}(s)]^2\right)\!\!\dd s,
 	\end{align}where 
 	$$ u^{n,m}(t)= S_t(w_{n,m},\eta^{n,m}),   \quad \eta^{n,m}(t)=\sum_{k=1}^m \I_{[k-1,k)}(t)\eta_{k_n-m+k}^n(t-k+1),
 \quad t\in J_m.
$$
Note that 
\begin{equation}\label{Ereds}
\int_0^me^{-\nu\la_1 (m-s)} \lag\eta^{n,m}(s),u^{n,m}(s)\rag  \dd s \to \int_0^me^{-\nu\la_1 (m-s)} \lag\xi^m(s),u^m(s)\rag \dd s, 
\end{equation}
as $\eta^{n,m}\to \xi^{m}$ strongly in $L^2(J_m,H)$ and $u^{n,m}\rightharpoonup u^{m}$ weakly in $L^2(J_m,V)$.~Since
$$
\left(\int_0^m e^{-\nu\la_1 (m-s)}[\cdot]^2\dd s\right)^{1/2}
$$is a norm in $L^2(J_m,V)$ which is   equivalent to the original one, the following inequality holds   
$$
 \int_0^me^{-\nu\la_1 (m-s)} [u^m(s)]^2\dd s \le \liminf_{n\to \ty}  \int_0^me^{-\nu\la_1 (m-s)} [u^{n,m}(s)]^2\dd s. 
$$  Combining this with \eqref{E:1.4}-\eqref{Ereds}, we get
\begin{align*}
 \limsup_{n\to\ty}\|v_{k_n}\|^2&\le \limsup_{n\to\ty} \left(e^{-\nu\la_1 m}\|w_{n,m}\|^2\right)+\|w\|^2-e^{-\nu\la_1 m}\|w_m\|^2\\&\le e^{-\nu\la_1 m}M+ \|w\|^2.
\end{align*}As $m\ge1$ is arbitrary, we arrive at \eqref{1.9}.
\end{proof}

  \subsection{Approximate controllability}

  By the decomposability assumption, we have $0\in \KK$.~In view of \eqref{1.3},   
       Condition~(iii) is verified with $\hat u=0$, $\zeta_1=\ldots=\zeta_n=0$, and  sufficiently large~$n\ge1$.

        To check Condition~(iv), we consider the following linearisation~of~Eq.~\eqref{1.1} around the same~trajectory $\tilde u$ as in Section~\ref{SS:2.2}:
 \begin{gather}
  \dot w + \nu  Lw + Q(\tilde u,w) =   \zeta(t), \label{3.3}\\
 	w(0)=0,   \nonumber
 \end{gather} where $Q$ is given by \eqref{3.4C}.~Then $(D_\eta S)(u,\eta)\zeta = w(1)$ for any $\zeta\in E$.~For   any   smooth function~$w_1\in H $,~we can find a smooth func\-tion~$w:[0,1]\times D\to \R^2$ such that $w(0)=0$, $w(1)=w_1$, and $w(t)\in H$ for all $t\in [0,1]$. Replacing~$w$~into Eq.~\eqref{3.3},~we find explicitly  a control~$ \zeta \in E$ such that~$(D_\eta S)(u,\eta)\zeta = w_1$. This~shows that the image of the mapping~$(D_\eta S)(u,\eta):E\to H$ is dense~in~$H$ for any~$(u,\eta)\in H\times \KK$.   Thus Conditions~(i)-(v) are verified. Applying Theorem~\ref{T:1.1}, we complete the proof of the Main Theorem.

 \section{Appendix}
 \subsection{Proof of Proposition~\ref{P:2.1}}

 The  existence and uniqueness of solution is proved, e.g., in  Chapter~III of~\cite{MR0609732}.   Here we give a   formal derivation of inequalities~\eqref{1.3} and \eqref{AAA1.3}  and equality~\eqref{1.4}.  

   Taking the scalar product in $H$ of Eq.~\eqref{1.1} with~$2u$ and   using the identity
$$
\lag B(u),u\rag=0, \quad u\in V,
$$  
we get 
\begin{equation}\label{E:ZE}
\frac{\dd}{\dd t}\|u\|^2	+2\nu \|u\|_1^2=2\lag\eta,u\rag\le 2 \|\eta\|_{V'}\|u\|_1\le \nu \|u\|_1^2+ \nu^{-1}\|\eta\|_{V'}^2.
\end{equation}Combining this with the Poincar\'e inequality, we obtain \eqref{1.3}.~From \eqref{E:ZE} we also derive the inequality
\begin{equation}\label{E:zds}
	\int_0^t \|u\|_1^2\dd s\le  {\nu}^{-1}\|u_0\|^2+  {\nu^{-2}}  \int_0^t  \|\eta\|_{V'}^2\dd s\quad t\in J_T 
\end{equation}which implies \eqref{AAA1.3}.~To   prove \eqref{1.4}, we rewrite the   equality in \eqref{E:ZE} in the~form
$$
\frac{\dd}{\dd t}\|u\|^2	+\nu\la_1\|u\|^2=2\left(\lag \eta,u\rag-\nu [u]^2\right)
$$and apply   the variation of constants formula.

 \subsection{Proof of Proposition~\ref{P:2.2}}

  By Proposition~\ref{P:2.1}, the sequence $u^n=S_\cdot(u_0^n,\eta^n)$ is bounded in the space $C(J_T,H)\cap L^2(J_T,V)$. From the inequality 
  $$
  \|B(u)\|_{V'}\le C \|u\| \|u\|_1, \quad u\in V 
  $$ and Eq.~\eqref{1.1} we derive that 
  \begin{equation}\label{E:MMM} 
  	\|\dot u^n\|_{L^2(J_T,V')} \le M_1, \quad n\ge1.
  \end{equation}
For any $R>0$, let us set $D_R:=D\cap B_{\R^2}(R)$.~Then  the space $H^1(D_R)$ is compactly embedded into $L^2(D_R)$.~Applying Theorem~2.1 of Chapter~III in~\cite{MR0609732} with spaces~$X_0=H^1(D_R)$, $X=L^2(D_R)$, $X_1=H^{-1}(D_R)$  and numbers~$\alpha_0=\alpha_1=2$, and using   the diagonal
process,
we find a subsequence~$\{u^{k_n}\}$ such~that
  \begin{align}
  	&u^{k_n}\rightharpoonup u \quad &\text{weak-star in $L^\infty(J_T,H)$}, \label{AAAZ1}\\
  	&u^{k_n}\rightharpoonup u \quad &\text{weakly in $L^2(J_T,V)$},\label{AAAZ2}\\
  	&u^{k_n}\to u\quad &\text{strongly in $L^2(J_T,L^2(D_R))$} \label{AAAZ3}
  \end{align}for any $R>0$.~Passing to the limit in the equation  for $u^{k_n}(t)$, we   conclude that~$u(t)$ is the solution~$S_t(u_0,\eta)$, $t\in J_T$.~Moreover, by the uniqueness of the limit, it is easy to see that limits~\eqref{AAAZ1}-\eqref{AAAZ3} hold for the full sequence $\{u^n\}$. This proves~\eqref{E:B}. 
  
Let us take any  $\varphi \in \VV$ (see~\eqref{N:0.8}). By inequality \eqref{1.3}, we~have 
$$
 |\lag u^n(t),\varphi\rag |\le M_2, \quad n\ge1, \, t\in J_T,
$$and by inequality~\eqref{E:MMM},   
\begin{align*} 
|\lag u^n(t+\tau)-u^n(t),\varphi\rag |&\le \int_t^{t+\tau} |\lag \dot u^n(s),\varphi\rag|\dd s	\\&\le \tau^{\frac12} \|\dot u^n\|_{L^2(J_T,V')} \|\varphi\|_1
\le \tau^{\frac12} M_1 \|\varphi\|_1
\end{align*}for any $t\in J_T$ and $\tau\in (0,T-t)$. Thus   the   Arzel\`a--Ascoli theorem implies that  
$$
 \lag u^n (t),\varphi \rag  \to \lag u (t),\varphi \rag  
$$uniformly in $t\in J_T$. Using the fact that $\VV$ is dense in $H$, we get \eqref{E:A}.

\subsection{Proof of Lemma~\ref{L:2.4}}
 
 Let us set  $J:=[0,1]$.~By Proposition~\ref{P:2.1},  we have $\tilde u=S_\cdot(u,\eta)\in L^2(J, V)$ for any~$u\in H$ and $\eta\in \KK$.~Using standard methods, one can show that the     mapping~$\Psi:w_0\mapsto w$ (i.e., the resolving operator of problem~\eqref{3.3C},~\eqref{3.3CD}) is continuous from~$H$ to~${\cal X}:=L^2(J,V)\cap W^{1,2}(J,H^{-1}) $. Let us show that     the linear mapping 
 $$
 \Phi^{\tilde u}: H\to   L^2(J,H^{-1}), \quad w_0\mapsto \lag \tilde u,\nabla\rag w+\lag w,\nabla\rag \tilde u
 $$is   compact.~Indeed, let $\{w_0^n\}$ be a bounded sequence  in $H$.~Then the sequence $\{\Psi(w_0^n)\}$ is bounded in $\cal X$.~As in the previous subsection, we apply   Theorem~2.1~of Chapter~III in~\cite{MR0609732}  with  spaces~$X_0=H^1(D_R)$, $X=L^2(D_R)$, $X_1=H^{-1}(D_R)$ and numbers~$\alpha_0=\alpha_1=2$, and use  the diagonal
process  to 
  find a subsequence~$\{\Psi(w_0^{k_n})\}$   converging strongly in~$L^2(J,L^2(D_R))$ for any $R>0$.

 On the other hand, for any $\e>0$, there is a number~$R>0$ and a smooth function $\varphi: J\times D\to \R^2 $  with $\supp \varphi(t,\cdot)\subset D_R $ for any $t\in J$  and 
 \begin{equation}\label{EEASDF}
 	\|\tilde u-\varphi\|_{L^2(J,V)}<\e.
 \end{equation}
Using the boundedness of the sequence $\{\Phi^{\tilde u}(w_0^{k_n})\}$ in $L^2(J,V)$ and the estimate 
 $$
\| \lag  a,\nabla\rag b\|_{H^{-1}}\le C_1 \|a\|_{H^{1/2}}\|b\|_{H^{1/2}}\le C_2 \|a\|_1 (\|b\|\|b\|_1)^{\frac12}, \quad a,b\in V, 
 $$   we obtain
 \begin{align*}
\| \Phi^{\tilde u}(w_0^{k_n})-\Phi^{\tilde u}(w_0^{k_m})\|_{L^2(J,H^{-1})}&\le 	\|\Phi^{\varphi}(w_0^{k_n})-\Phi^{\varphi}(w_0^{k_m})\|_{L^2(J,H^{-1})}\\&\quad+	\|\Phi^{\tilde u-\varphi}(w_0^{k_n})-\Phi^{\tilde u-\varphi}(w_0^{k_m})\|_{L^2(J,H^{-1})}\\&\le C_3 \| \Psi(w_0^{k_n})-\Psi(w_0^{k_m})\|^{1/2}_{L^2(J,L^2(D_R))}\\&\quad+	C_3\| \tilde u-\varphi \|_{L^2(J,V)},
 \end{align*}  where the constant $C_3>0$ does not depend on the numbers $n,m,\e$. Combining this with \eqref{EEASDF} and choosing $n$ and $m$ sufficiently large, we see that
 $$
 \| \Phi^{\tilde u}(w_0^{k_n})-\Phi^{\tilde u}(w_0^{k_m})\|_{L^2(J,H^{-1})}<2C_3\e. 
 $$
  This shows that $ \Phi^{\tilde u}$ is compact.~Thus  the map~$\Psi_2  :H\to H$ is compact as  a composition of $ \Phi^{\tilde u}$ with some linear continuous~map.

\def\cprime{$'$} \def\cprime{$'$}
  \def\polhk#1{\setbox0=\hbox{#1}{\ooalign{\hidewidth
  \lower1.5ex\hbox{`}\hidewidth\crcr\unhbox0}}}
  \def\polhk#1{\setbox0=\hbox{#1}{\ooalign{\hidewidth
  \lower1.5ex\hbox{`}\hidewidth\crcr\unhbox0}}}
  \def\polhk#1{\setbox0=\hbox{#1}{\ooalign{\hidewidth
  \lower1.5ex\hbox{`}\hidewidth\crcr\unhbox0}}} \def\cprime{$'$}
  \def\polhk#1{\setbox0=\hbox{#1}{\ooalign{\hidewidth
  \lower1.5ex\hbox{`}\hidewidth\crcr\unhbox0}}} \def\cprime{$'$}
  \def\cprime{$'$} \def\cprime{$'$} \def\cprime{$'$}

\end{document}